\newcommand{\gr}{\cellcolor[HTML]{E0E0E0}}
\newtheorem*{theorem}{Theorem}
\title{\bf 
The chromatic number of the plane \\ is at least 5 -- a human-verifiable proof
}
\author{\bf 
\textcolor[rgb]{1,0.25,0.6}{Jaan Parts} \\
} 
\date{\normalsize \textcolor[rgb]{1,0.25,0.6}{Kazan, Russia, jaan\_parts@.mail.ru}}
\begin{document}

\maketitle

\pagestyle{empty}
\thispagestyle{empty}

\begin{abstract}
We present a new proof of the known fact that the chromatic number of the plane is at least 5. The main difference of this proof is that it can be verified manually without the help of the computer.
\end{abstract}

\section{Background}

Ever since Marty McFly found himself at the Peabody\footnote{The characters from Robert Zemeckis's film "Back to the Future".} farm (one might say, since 1955) and until recently, I could not understand what could be interesting about the breeding of trees. And then the moment came when I opened my eyes: this is really interesting!

Before I tell you, what made me change my views, we’ll have to go back in time. Not so far, just a couple of years ago, to the time Aubrey de Grey's article \cite{grey} was published, where he proved that the chromatic number of the plane is at least 5.

This broke the ice in a problem that had been frozen since about the time of old man Peabody. The \textit{chromatic number} $\chi$ of the plane is the smallest number of colors required to cover the entire Euclidean plane such that any two of its points at a unit distance have a different color. This coloring is called \textit{proper}. Currently it is known that $\chi\in\{5, 6, 7 \}$.

De Grey's proof is not only the first, but, in our opinion, the best of the known ones. But like all others found thus far, it has an annoying flaw: it cannot be verified without using a computer. Here we make an attempt to fill this gap: that is, we present a proof of the known fact $\chi\ge5$, which can be verified manually in full in a reasonable time.

This paper is organized as follows. Section 2 describes our way through labyrinths of coral caves to the light. Also, we introduce here some useful concepts. In Section 3, we give the general outline of the human-verifiable proof. In Section 4, we take a closer look at the part of the proof that previously required computer verification. In Section 5, we describe the methods used to reduce the number of elementary checks. Section 6 discusses the possibilities for further simplifying the proof.

\section{Ramble on}

Here we discuss some ideas, mostly those that have turned out to be dead ends. Perhaps because we missed some obvious move. 

Initial hopes to simplify de Grey's proof were somehow connected with the idea of minimizing 5-chromatic graphs and their subgraphs thereof for which the number of 4-colorings of some specific vertices is substantially limited.

Thus, for example, graphs are known in which certain pairs of vertices (and hence the points of the plane) at distances $1/3$, $5/3$, $7/3$, $3$ and $\sqrt{11/3}$ are non-monochromatic \cite{par}: that is, they cannot be the same color in any proper 4-coloring. A pair of points at a distance $8/3$, by contrast, is necessarily monochromatic (from here there is only one step to a 5-chromatic graph: it remains to form an isosceles triangle of two such mono-pairs and a single edge). Sets of three points forming an equilateral triangle with side $\sqrt3$, $\sqrt5$, $\sqrt7$, or $1/\sqrt3$ are necessarily non-mono. The triple with side $\sqrt3$ is used in de Grey's proof \cite{grey}; the triple with side $1/\sqrt3$ appears in an alternative proof given by Exoo and Ismailescu \cite{exoo}.

At the moment, all proofs we know require a computer check of a so-called base graph containing more than 200 vertices \cite{par} to verify that it possesses some property of the kind just listed. The task of analyzing all possible variants of coloring such a graph is far beyond human capabilities. However, it is clear that the problems of minimizing graphs and simplifying the proof are in different planes and have different requirements for graphs.

Our first attempt to step further was the idea of going from a non-mono-triple to sets of vertices that would lead to smaller graphs. What if we consider a set of $m$ triples, only some number $n$ of which should have the property of non-mono-chromaticity? For convenience, we will call such a set a hyper-triple with probability $p\ge n/m$.

It can be expected that with decreasing $p$ the required number of vertices of the corresponding graph will decrease. At some point, it will decrease so much that it will be easy to verify the property of non-mono-chromaticity. After that, one can try to go in the opposite direction, combining hyper-triples and gradually increasing $p$ until we get the original non-mono triple with $p=1$.

A trivial example of a hyper-triple with $p\ge1/2$ is a pair of triples with one common vertex and two pairs of vertices connected by an edge. We tried to find examples of non-trivial pairs of triples and got some progress in this direction. So, we considered different sets of concentric triples rotated relative to each other by some angle $\alpha$. For a pair of triples with side $\sqrt3$ and $p\ge1/2$, we found a 181-vertex graph for $\alpha=\pi$ and a 115-vertex graph for $\alpha=\arccos(5/6)$.

\begin{figure}[!b]
\centering
\begin{tabular}{cc}
\includegraphics[scale=0.3]{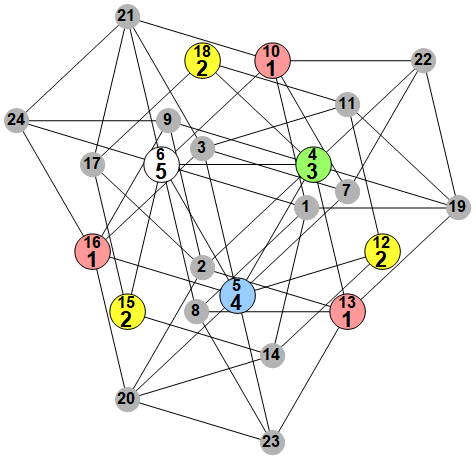} & \includegraphics[scale=0.3]{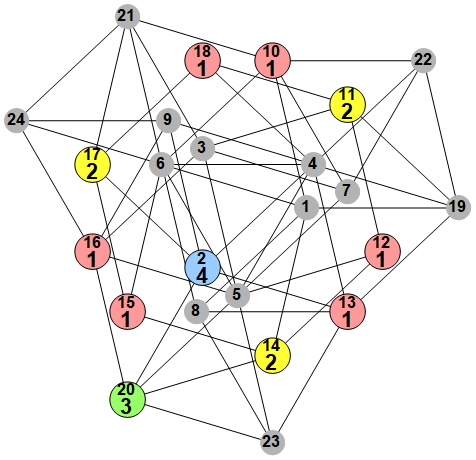}
\end{tabular} \par
\caption{Two variants of the initial coloring of the 24-vertex graph $G_{24}$, all of the three triples of which with side $\sqrt3$ are monochromatic. Vertex numbers are shown in small print, color numbers are shown in large print.}
\label{g24}
\end{figure}

\begin{figure}[!b]
\begin{tabular}{l@{\!}}
\includegraphics[scale=0.38]{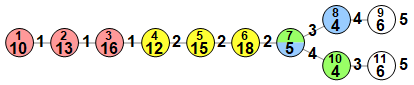} \\
\includegraphics[scale=0.38]{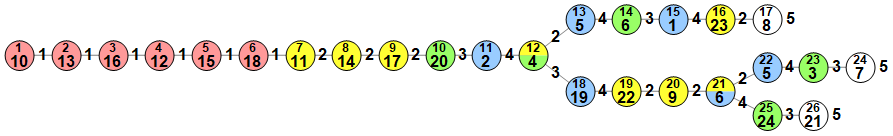}
\end{tabular} \par
\caption{Coloring trees for the graph $G_{24}$. Node numbers are shown in small print, vertex numbers are shown in large print. Color number is indicated on the edges of the tree.}
\label{t24}
\end{figure}

Finally, we discovered the 24-vertex graph shown in Fig.\ref{g24}, in which at least one of the three triples $T_a=\{10, 13, 16\}$, $T_b=\{12, 15, 18\}$, $T_c=\{11, 14 , 17\}$ is non-mono. For such a small graph, it is already quite easy to give a compact proof of this property. If the triples $T_a$ and $T_b$ are mono, then they are colored the same (let it be color 1), since they are both connected to the 3-clique $\{4, 5, 6\}$. If the triple $T_c$ is also mono, then it has a different color (let it be color 2), since it is connected to the triple $T_b$. Now we choose for vertex 20 one of the two remaining colors (let it be color 3) and continue coloring the vertices in the sequence shown in Fig.\ref{t24} (lower image). We thereby build a so-called coloring tree, which pretty quickly leads to a vertex that cannot have any of the four colors. This tree has two branch points. The same figure (upper image) shows a coloring tree for the case where the mono-triples $T_a$ and $T_b$ have different colors. Since all paths of coloring lead to a contradiction, we can conclude that at least one of the triples $T_a$, $T_b$ and $T_c$ is non-mono.

Then one can add one more triple $T_d$, rotated by an angle $\pi$ relative to the triple $T_a$, and consider all possible sets of four triples, three each. Since each such set contains a non-mono-triple, at least two triples of these four must be non-mono: in other words, $p\ge2/4$.

However, we were unable to advance further. If we take a couple of triples at once (the 181- and 115-vertex graphs), then the size of the coloring tree increases significantly. Worse, we were unable to find a way to make more complex hyper-triples from less complex ones. And the number of nodes (colored vertices) of the first tree, which we got for a 221-vertex graph with a non-mono-triple with side $\sqrt3$, was more than 80\,000. It would be difficult to manually check such a large tree.
	
We tried to work with other constructions. Note that in de Grey's proof one can replace non-mono-triples by pairs with distance~2. When coloring a 7-vertex wheel graph (Fig.\ref{g7}), the colorings with two concentric non-mono-triples are exactly those with three concentric pairs (i.e. the diagonals), at least one of which is mono. This allows us to consider different sets of such pairs, starting with, say, a dozen pairs, one of which is mono, and gradually reduce the number of such pairs, moving to more complex graphs. However, we did not get success on this path either.

The fan-like graph obtained by sequentially connecting four 4-vertex rhombus graphs with a common vertex seems tempting too. The distance between the extreme vertices of this graph is $8/3$, that is, a mono-pair can be formed. For three such rhombuses, the distance between the extreme vertices is $\sqrt{11/3}$, which can be a non-mono pair. The only question is what to do with it?

\newpage

\begin{figure}[H]
\centering
\includegraphics[scale=0.3]{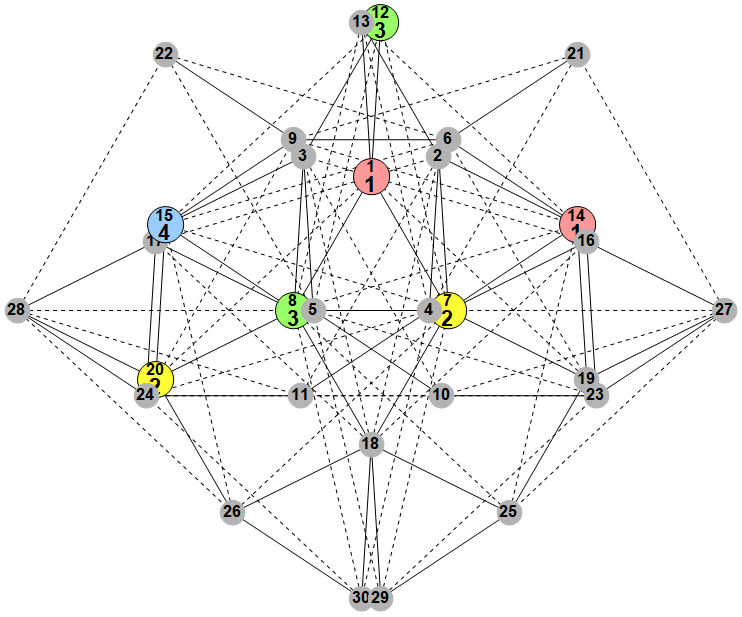}
\caption{Initial coloring of the 30-vertex two-distance graph $G_{30}$ with mono-pair of distance $8/3$ between vertices 14 and 15. Edges of length $\sqrt{11/3}$ are dashed lines, unit-distance edges are solid lines.}
\label{g30}
\end{figure}

\begin{figure}[H]
\centering
\includegraphics[scale=0.38]{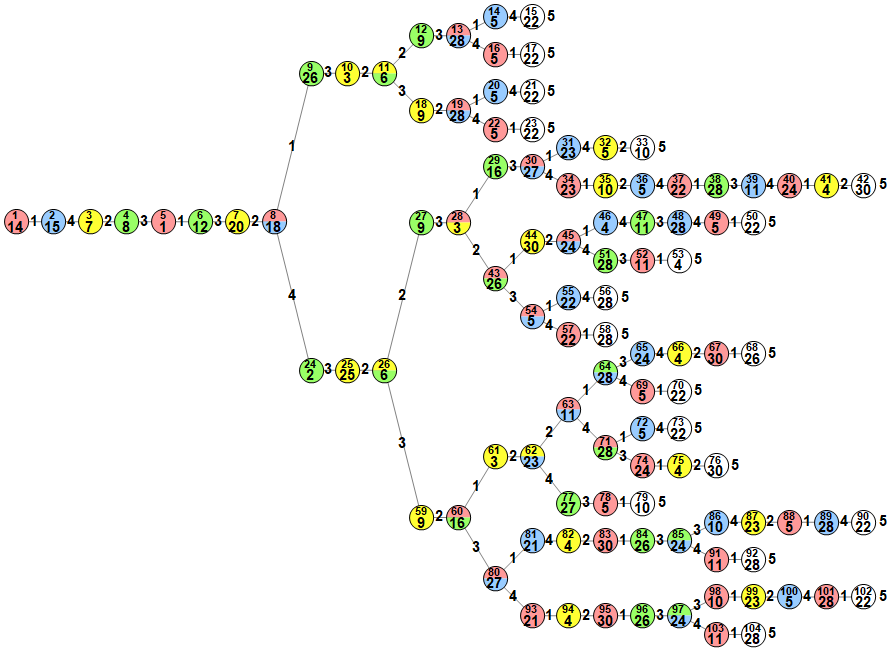}
\caption{Coloring tree for the graph $G_{30}$.}
\label{t30}
\end{figure}

Finally, we went back to the coloring trees. We noticed that if we take a base graph that is not particularly compact, we can often get a tree of noticeably smaller size. This is quite understandable: in this case, one can choose a more optimal coloring sequence. An increase in the symmetry order of the base graph also promised good prospects. Further work, in fact, boiled down to improving the method of growing compact trees. This path gave good results in the end.

It remains to present the proof of $\chi\ge5$ in the form that has taken shape at the moment and describe the method used to minimize the coloring trees. But first, let us demonstrate the effectiveness of this approach with one more example.

Consider a 30-vertex two-distance graph (Fig.\ref{g30}) with 96 edges, 48 each for distances 1 and $\sqrt{11/3}$, and containing a mono-pair $\{14, 15\}$ with a distance $8/3$. This graph can replace the 40-vertex graph appearing in the previously mentioned alternative proof \cite{exoo}. The relatively small size of this graph does not make it convenient for analysis. One can start by coloring the four vertices $\{7, 8, 14, 15\}$, which take on different colors, assuming the pair $\{14, 15\}$ is non-mono. It is also possible, due to symmetry, to fix the color of vertex 1 and two more vertices. But then the number of coloring options grows rapidly, and it becomes difficult to keep them all in mind. The situation changes when we build a tree (see Fig.\ref{t30}).

\section{The fifth element}

\paragraph{Definitions.}
A coloring \textit{tree} is a sequence of assignments of colors to the vertices of a graph, which uses for each vertex all color options not prohibited by prior assignments. Each such assignment is called a tree \textit{node}. Depending on the coloring method and the number of permitted colors, the following types of nodes are distinguished: \textit{root} (with a predetermined color), \textit{branch} (which can be colored in two or more colors), \textit{stem} (which can be colored only in a single color), \textit{end} (which cannot be dyed in any of the colors).
The tree can be represented as a graph or as a list of nodes. A vertex can appear in multiple branches of the same tree.

A coloring \textit{diagram} is a sequence of nodes with all interconnections with previously colored nodes. (The diagram is a generalization of the tree concept and can be thought of as a tree braided with lianas.) In a simplified form, the diagram contains only the necessary interconnections, one per color \textit{port} of a given node.

By \textit{minimizing} a tree, we mean reducing the number of its nodes. 

A \textit{working} graph is the current graph to be colored. 

A \textit{root} graph is a subgraph of a working graph with a limited set of options for coloring vertices up to symmetries (permutations of vertices and colors). Each such option is considered separately, giving the initial coloring of the working graph vertices.

A \textit{base} graph is a graph containing all considered working graphs. The symmetry group of a root graph must be a subgroup of the symmetry group of the base graph. 

Two vertices of the graph belong to the same \textit{orbit} if there exists an automorphism that maps one vertex to another.

\paragraph{Base graph.}
We use base graphs with a symmetry group order 24 and vertex coordinates, which on the complex plane can be represented as
$$\frac{a+b\sqrt{33}+ic\sqrt{3}+id\sqrt{11}}{12},
\;\;a,b,c,d \in \mathbb{Z}, \;a-b+c+d \in 4\mathbb{Z},
$$
so it is convenient \cite{exoo} to denote each vertex by an integer vector $(a, b, c, d)$.

An orbit of the base graph is denoted by one of its vertices. The coordinates of the remaining vertices of the orbit can be obtained by rotations by an angle that is a multiple of $\pi/3$, by reflections relative to the coordinate axes, and by the \textit{conjugation} symmetry, which changes the sign of the elements $b$ and $c$ of the coordinate vector.

\paragraph{Root graph.}
We use, as root graphs, various extensions of 10-vertex Golomb graph that differ in the number $t$ of inner triangles. For convenience, a unified numbering of the vertices of $t$-Golomb graphs is used (Fig.\ref{num}). Note that the conjugation symmetry given by the permutation of the vertices $(3n-1,\: 3n)$, $n\in\{1,2,3,4,5,6\}$, acts on these graphs.

\begin{figure}[!b]
\centering
\begin{tabular}{@{}cccc@{}}
\includegraphics[scale=0.36]{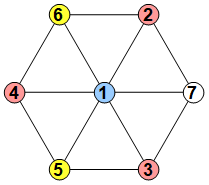} & \includegraphics[scale=0.36]{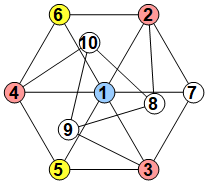} &
\includegraphics[scale=0.36]{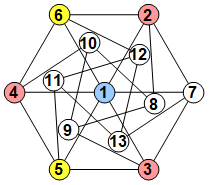} & \includegraphics[scale=0.36]{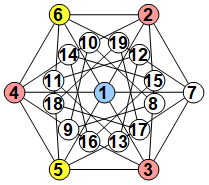}
\end{tabular} \par
\caption{Unified numbering of root vertices (nodes) for the wheel graph, and 1-, 2-, 4-Golomb graphs (from left to right). Vertices 2, 3, 4, highlighted in red, correspond to the original mono-triple.}
\label{num}
\end{figure}

\paragraph{Main idea.}
The essence of the approach is that a tree is created that reflects the sequence of coloring the vertices of a particular graph, and each such sequence ends with a vertex that cannot be colored in any of the colors. Then this tree is used ready-made at one of the stages of the proof. Thus, this stage consists of a series of similar checks. The number of such checks is large, but quite reasonable (in our case, less than 800). Each check consists of several elementary actions, where at the input there are between two and four vertices of different colors, and at the output, a new vertex appears, colored in one of the remaining colors. An elementary check also includes calculating unit distances (according to the number of input vertices).

\vspace{4mm}

\begin{theorem}
{\upshape \cite{grey}} $\chi\ge5$.
\end{theorem}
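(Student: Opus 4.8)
The plan is to follow the general strategy pioneered by de Grey \cite{grey}: exhibit a finite unit-distance graph in the plane that admits no proper 4-coloring, which immediately forces $\chi\ge5$. The construction proceeds in two layers. First, I would fix a small \emph{root graph} — one of the $t$-Golomb graphs described above — and argue that, up to the symmetries of the base graph (rotations by multiples of $\pi/3$, reflections in the axes, and the conjugation symmetry swapping the sign of $b$ and $c$), any hypothetical proper 4-coloring of the final graph restricts on the red vertices $\{2,3,4\}$ to one of a small number of essentially distinct patterns. In particular, after using up the colour symmetry, we may assume that if the triple $\{2,3,4\}$ is non-monochromatic then its colouring is fixed, and the only case that must be handled by brute force is the case in which $\{2,3,4\}$ (and its rotated copies) is monochromatic.

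Second, I would realize each of the monochromatic cases inside a concrete \emph{working graph} contained in the base graph, and for each such working graph build a \emph{coloring tree}: starting from the forced colours on the root vertices, repeatedly pick the next vertex of the working graph, branch on its admissible colours, and propagate. Because every vertex at unit distance from a set of colour representatives loses those colours, each branch either forces a unique colour (a \emph{stem} node) or splits into at most three children (a \emph{branch} node); the claim — and the content of the later sections — is that in every working graph every path down the tree terminates at an \emph{end} node, a vertex all four of whose colours are excluded by earlier assignments along that path. Collecting the working graphs over all root cases, their union is a single finite unit-distance graph with no proper 4-colouring, so $\chi\ge5$. The reduction from ``$\{2,3,4\}$ monochromatic in all rotated copies leads to a contradiction'' back to ``the base graph is not 4-colourable'' is exactly the mono-triple argument sketched in Section 2 (e.g. the $G_{24}$ discussion), now run with the $\sqrt3$-triple inside the de Grey-type configuration.

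The genuinely hard part is the second layer: one must actually verify that the coloring trees close, i.e. that no branch survives to the bottom of its working graph. In all previous proofs this is precisely the step that needed a computer, because the naive search tree over a $200^{+}$-vertex graph is astronomically large. The technical heart of this paper — Sections 4 and 5 — is to make this step human-checkable, by (i) choosing base graphs with a symmetry group of order $24$ so that many branches are identified, (ii) deliberately using a somewhat larger, less ``tight'' working graph so that a far more economical colouring order is available, and (iii) ordering the vertices so that stem nodes dominate and the number of branch points stays tiny (as in the two-branch trees for $G_{24}$ and $G_{30}$). The price is bookkeeping: one must list the working graphs, their root cases, and for each a tree whose nodes are the forced/branching assignments, and then independently recompute the unit distances at each node. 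I expect the main obstacle to be organizing this so that the total number of elementary checks stays in the stated range (under $800$), each check being the routine verification that a handful of given vertices are pairwise non-adjacent or adjacent and that a named new vertex is at unit distance from the right colour representatives — routine individually, but unforgiving in aggregate.
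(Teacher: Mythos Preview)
Your plan captures Part~1 of the paper's proof correctly --- assume the $\sqrt3$-triple $\{2,3,4\}$ is monochromatic, reduce by symmetry to the handful of root colorings of the (2-)Golomb graph, and kill each with a hand-checkable coloring tree inside the base graph. The gap is in what you do with the complementary case. Establishing ``mono $\sqrt3$-triple $\Rightarrow$ contradiction'' only tells you that in any proper 4-coloring of the plane every $\sqrt3$-triple is non-mono; it does \emph{not} make the base graph, or the union of your working graphs, 5-chromatic. Indeed $G_{481}$ is 4-colorable --- the trees only rule out those 4-colorings that extend the prescribed monochromatic root. Your appeal to ``the mono-triple argument sketched in Section~2 (e.g.\ the $G_{24}$ discussion)'' does not close this: $G_{24}$ merely shows that one of three specific triples is non-mono, which is weaker than what Part~1 already gives, and the paper explicitly flags that line as a dead end.

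What the paper actually does for Part~2 is a separate, self-contained argument (following de~Grey) that you have omitted: from ``all $\sqrt3$-triples are non-mono'' one pins down the two admissible colorings of the wheel $G_7$, propagates through the hexagonal lattice to get two-color stripes in some direction, doubles distances so that the 7-point doubled wheel is 2-colored, overlays two rotated copies to force mono-pairs at distance~4, and finally spindles two such mono-pairs with a unit edge to reach a contradiction. This part is short and genuinely human-sized, but it is not optional --- it is precisely where the non-monochromatic case (which you dismiss as ``its colouring is fixed'') gets resolved, and nothing in your proposal supplies it.
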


\begin{proof}

We follow de Grey's proof \cite{grey}. 

Assume that four colors are enough. 

\begin{figure}[!b]
\centering
\begin{tabular}{@{}cccc@{}}
\includegraphics[scale=0.36]{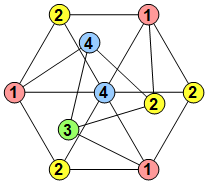} & \includegraphics[scale=0.36]{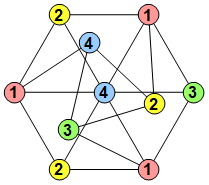} &
\includegraphics[scale=0.36]{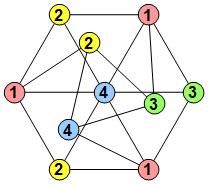} & \includegraphics[scale=0.36]{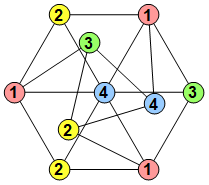}
\end{tabular} \par
\caption{All four non-isomorphic colorings of the Golomb graph containing mono-triple with side $\sqrt3$ highlighted in red.}
\label{gol}
\end{figure}

In the first part, it is proved that three points forming an equilateral triangle with side $\sqrt3$ cannot be colored the same. Assume the opposite, and consider three such points of the same color (let it be color 1). Then, up to a permutation of vertices and colors, there are only two different coloring options for the corresponding 7-vertex wheel graph, and four coloring options for the 10-vertex Golomb graph (Fig.\ref{gol}). (The remaining options for coloring the inner triangle of the Golomb graph are obtained by vertex permutation $(1)(2,3)(4)(5,6)(7)(8,9)(10)$ corresponding to the conjugation symmetry.)
Each of these two graphs can be used as a root graph, and each of the coloring options of the root graph can be associated with a coloring tree of the base graph (shown in Fig.\ref{base}), all branches of which end in vertices that cannot be colored in any of the four colors. It remains to consistently check the coloring of all nodes of this given set of trees. This is the most time-consuming part of the proof, and it is discussed in detail in the next section. Since all possible coloring sequences lead to an uncolorable vertex, the original triple is therefore non-mono.

\begin{figure}[!t]
\centering
\includegraphics[scale=0.34]{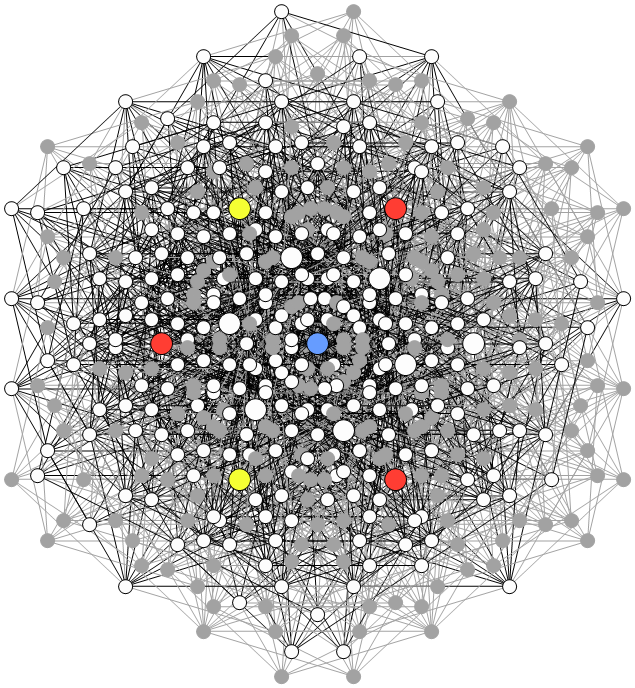}
\caption{The 481-vertex base graph $G_{481}$. Fixed root vertices are shown in color. Other vertices used in the coloring trees are highlighted in white. In total, 268 vertices are used. Unused vertices are shown in gray.}
\label{base}
\end{figure}

In the second part of the proof, based on the non-mono-triples obtained above, we build a construction that cannot be colored with four colors. A 7-vertex wheel graph containing two such non-mono-triples has only two non-isomorphic colorings (Fig.\ref{g7}). Hence it follows, provided that all triples with side $\sqrt3$ are non-mono, that any coloring of a hexagonal lattice formed by edges of unit length, at least in one of the three lattice directions, consists of alternating lattice lines with vertices of only two colors.
Indeed, if each wheel subgraph of the lattice has the coloring shown on the left in Fig.\ref{g7}, then this property holds for all three lattice directions. If a wheel subgraph shown on the right in Fig.\ref{g7} is encountered, then it uniquely determines such coloring of three adjacent lattice lines (the next lattice vertex in the row $\{1,4,1\}$ can only be of color 4, otherwise mono-triples appear), and therefore all other parallel lattice lines are also two-color. It follows that the 7-vertex set obtained by doubling all distances of the wheel graph can be colored in only two colors, and there are only three non-isomorphic coloring options shown on Fig.\ref{g19}.

Now consider two such sets with a common center, connected by edges. There are only two options for coloring the resulting 13-vertex set (see Fig.\ref{g37}), and any pair of vertices at distance 4 is mono. It remains to connect two such mono-pairs by an edge (Fig.\ref{g37} on the right) in order to come to a contradiction with the initial assumption of 4-colorability.


\begin{figure}[!b]
\centering
\begin{tabular}{@{}cc@{}}
\includegraphics[scale=0.36]{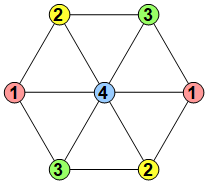} & \includegraphics[scale=0.36]{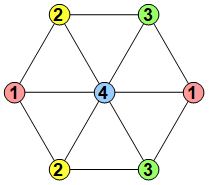}
\end{tabular} \par
\caption{Two non-isomorphic colorings of the wheel graph $G_7$ provided that both triples with side $\sqrt3$ are non-mono.}
\label{g7}
\end{figure}

\begin{figure}[!b]
\centering
\begin{tabular}{@{}ccc@{}}
\includegraphics[scale=0.36]{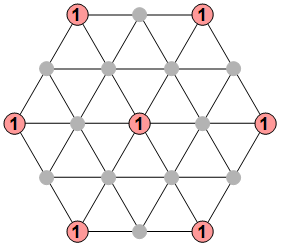} & \includegraphics[scale=0.36]{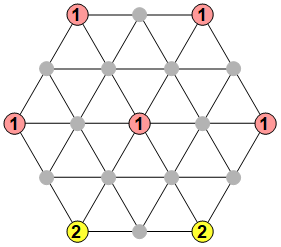} & \includegraphics[scale=0.36]{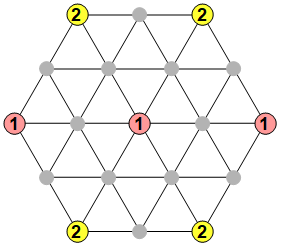}
\end{tabular} \par
\caption{Three non-isomorphic colorings of the part of hexagonal lattice provided that all triples with side $\sqrt3$ are non-mono.}
\label{g19}
\end{figure}

\begin{figure}[!b]
\centering
\begin{tabular}{@{}ccc@{}}
\includegraphics[scale=0.36]{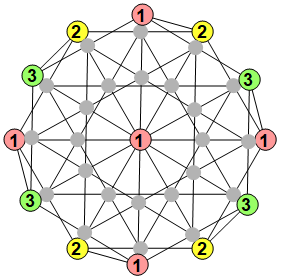} & \includegraphics[scale=0.36]{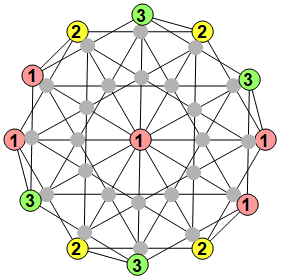} & \includegraphics[scale=0.36]{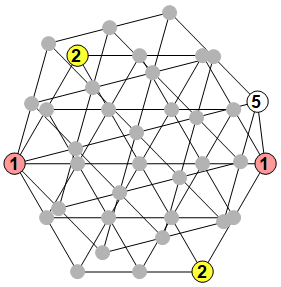}
\end{tabular} \par
\caption{The last two steps of the proof. On the left, two non-isomorphic colorings of two hexagonal lattices are shown, one of which is rotated by $\arccos(7/8)$. On the right is the coloring of the spindle formed by relative rotation of two hexagonal lattices by $\arccos(31/32)$.}
\label{g37}
\end{figure}

\end{proof}

\section{In the light}

Here we consider the first part of the proof in more detail. 

\paragraph{The coloring trees.}
At the initial stage, we use symmetry properties (isomorphic permutations of colors and vertices of the base graph) as much as possible in order to obtain the minimum set of root colorings of maximum length, leading presumably to small trees.

It is possible to stop at the set of colorings $R_a$, $a\in1\dots4$, of 10-vertex Golomb graph (see Fig.\ref{gol} and table \ref{root}), but we added three more vertices to this root graph, which allowed us to slightly reduce the total number of nodes. This gives 36 options for coloring the 13-vertex 2-Golomb graph (6 options for $R_1$ and 10 options each for $R_2$, $R_3$ and $R_4$). Extended colorings are denoted as $R_{ab}$, where one index $b\in0\dots9$ is added to the notation.

\begin{table}[!b]
\caption{Root colorings of a 2-Golomb graph and their correspondence to the roots of small and large trees. Root colorings that lead to large trees are shown gray.}
\label{root}
\smallskip
\small
\centering
\begin{tabular}{@{}c|c|*{10}{|>{\!}c<{\!}}}
\hline
$R_{ab}$ &\multicolumn{1}{r||}{-$b$}  &-0	&-1	&-2	&-3	&-4	&-5	&-6	&-7	&-8	&-9	\\
\hline
  &vertices	&\multicolumn{10}{c}{11\dots13 (colored 143 for $R_{12}$, and 413 for $R_{17}$)}	\\
\cline{2-12}
$a$-	 &  1\dots10	&132	&134	&142	&312	&314	&341	&342	&412	&431	&432	\\
\hline
\hline
1-	 &4 111 222 234	&-	    &$S$	&\gr$L_1$	&-	&$S$	&\gr$L_2$	&-	&\gr$L_2$	&$S$	&-	\\
\hline
2-	 &4 111 223 234	&$S$	&$S$	&\gr$L_3$	&\multicolumn{1}{c}{} &\multicolumn{2}{c}{$L_4$}	&\gr &\gr$L_5$	&$S$	&$S$	\\
\hline
3-	 &4 111 223 342	&$S$	&\gr$L_6$	&$S$	&$S$	&$S$	&$S$	&$S$	&$S$	&\gr$L_7$	&$S$	\\
\hline
4-	 &4 111 223 423	&\multicolumn{3}{c}{} &\multicolumn{4}{c}{\gr$L_8$} &\multicolumn{3}{c}{} \\
\hline
\end{tabular} \par
\end{table}

Further analysis shows that 24 of these 36 root colorings quickly lead to a contradiction (they give trees with up to 8 non-root nodes). Of these, 19 contain a mono-pair 
between the opposite vertices of two inner triangles of 2-Golomb graph, and Fig.\ref{small} (middle image) shows that such a mono-pair is prohibited. The colorings $R_{15}=\{4\,111\,222\,234\,341\}$ and $R_{17}=\{4\,111\,222\,234\,413\}$ are isomorphic, which is ensured by the permutation of colors $(1, 2)(3)(4)$ and rotation by the angle $\pi/3$. The colorings $R_{23}\dots R_{26}$ and $R_{40}\dots R_{49}$ are convenient to combine by discarding some vertices (see table \ref{root}).

As a result, we got nine different root colorings with the number of vertices from 7 to 13, eight of which lead to \textit{large} trees $L_1$\dots$L_8$ (Fig.\ref{large}), and remaining one leads to a \textit{small} tree $S$ (Fig.\ref{small}).

\begin{figure}[!b]
\centering
\begin{tabular}{@{}ccc@{}}
\includegraphics[scale=0.361]{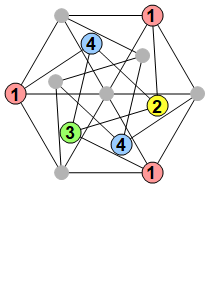} & \includegraphics[scale=0.358]{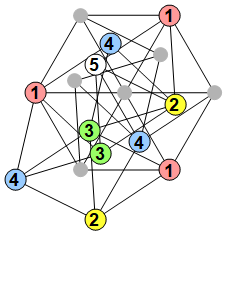} & \includegraphics[scale=0.36]{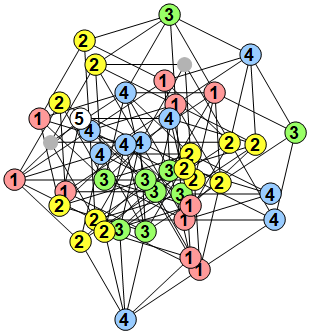}
\end{tabular} \par
\caption{Root graph of small tree $S$ and full graphs corresponding to trees $S$ and $L_2$ (from left to right). The numbers indicate the vertex colors. Note that vertices of the same color 
form monochromatic clusters.}
\label{small}
\end{figure}

\begin{figure}[!b]
\centering
\begin{tabular}{@{}cccc@{}}
\includegraphics[scale=0.36]{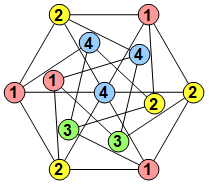} & \includegraphics[scale=0.36]{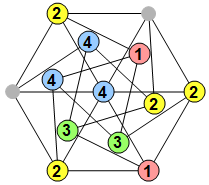} &
\includegraphics[scale=0.36]{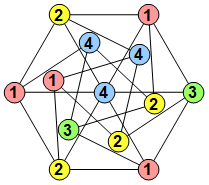} & \includegraphics[scale=0.36]{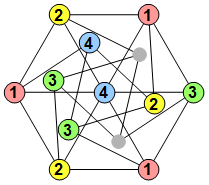} \\
\\
\includegraphics[scale=0.36]{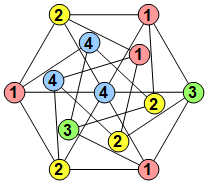} & \includegraphics[scale=0.36]{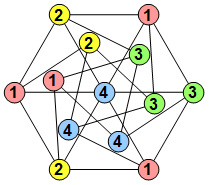} &
\includegraphics[scale=0.36]{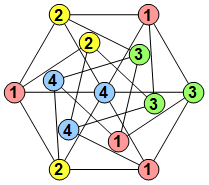} & \includegraphics[scale=0.36]{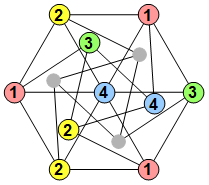} 
\end{tabular} \par
\caption{Root 2-Golomb graphs of large trees $L_{1\dots8}$ (left to right, top to bottom). The numbers indicate the vertex colors. Undefined or unused root vertices are shown in gray.}
\label{large}
\end{figure}

The resulting trees are shown in Fig.\ref{trees} (although here they look more like Mexican cacti). Some parameters of trees are given in the table \ref{param}. In our case, all non-root branch nodes are bifurcations. 
Table \ref{orb} lists orbits of the base graph and a number of orbit vertices used by trees.

\paragraph{A coloring diagram.}
In our case, the problem of convenient data presentation plays an important role. For small trees, a coloring that obviously leads to a contradiction can be easily shown directly on the graph (see Fig.\ref{small}, second image). For large trees, the density of vertices and edges increases, and in addition, branches appear. The same figure (third image) shows a 
graph for the tree $L_2$ with a total of 43 colored vertices and no branches. As can be seen, even in this truncated form, it is not very convenient for analysis. 
To solve this problem and eliminate confusing interconnections, all the information necessary to perform the verification, including the vertex coordinates, is integrated into a so-called coloring diagram and is presented in a tabular form\footnote{All required data can also be found on the Polymath project website \cite{pm16}.}
(see table \ref{nodes}).

Each node of the diagram has its own ordinal number, four color ports and four integer vertex coordinates. The ports are arranged in ascending order of color number from 1 to 4 (for convenience, color coding has also been added, although it is not necessary to use it). Each port lists previously colored neighbor nodes of given color (zero corresponds to the unused colors of the root node). 
The unnumbered (empty) port gives the output color of this node. Branch nodes (with several coloring options) are grayed. End nodes (for which all 4 colors are occupied) are colored white. Frames of special nodes (root, branch, end) are thickened.

\paragraph{Verification notes.}
The nodes of the diagram are checked sequentially. One elementary check in most cases involves checking the unit distance from the vertex corresponding to the current node to three neighboring vertices of the specified nodes, and checking the colors of these vertices.

The most time-consuming operation is calculating distances. In our case, it is enough to take the difference of two vectors of vertex coordinates in the form $(a, b, c, d)$, and make sure that it belongs to one of 30 options\footnote{It is easy to verify that each of these options gives a distance of unit length.}
$(\pm12, 0, 0, 0)$, $(\pm10, 0, 0, \pm2)$, $(\pm6, 0, \pm6, 0)$, $(0, \pm2, \pm2, 0)$, $(\pm5, \pm1,\pm5,\pm1)$, or $(\pm3, \pm1,\pm1,\pm3)$, and in the last two cases, the number of minus signs must be odd.

Additionally, one must make sure that all colorings lie on one of the paths from the root of the tree. On the first pass of a branch node, it can be marked, and after reaching an end node, one can return to a nearest marked branch node, marking all intermediate nodes as prohibited. On the second pass, one can mark this branch node as a stem node, and skip all prohibited nodes.

If we discard the root nodes, then the total number of nodes of nine trees is 787. With some skill, one can spend about half a minute on one elementary check, or about 100 checks per hour. Thus, it is quite possible to handle proof verification in a day. (If only the coloring is checked, then one can keep within a couple of hours.)

\newpage

\begin{figure}[H]
\centering
{
\centering
\begin{tabular}{cccccccccc}
    \includegraphics[scale=0.175]{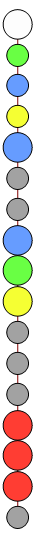} &
    \includegraphics[scale=0.24]{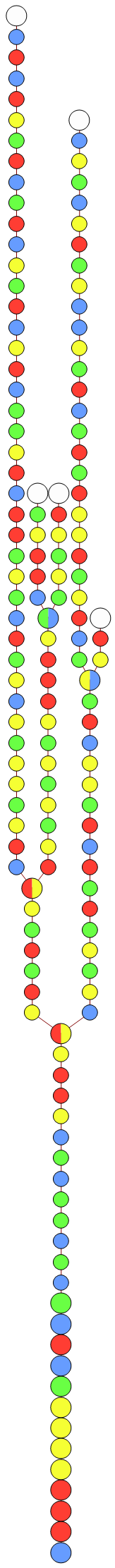} & \includegraphics[scale=0.24]{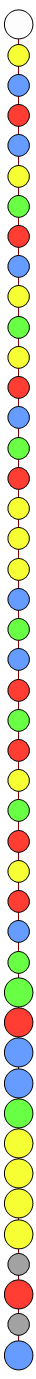} & \includegraphics[scale=0.24]{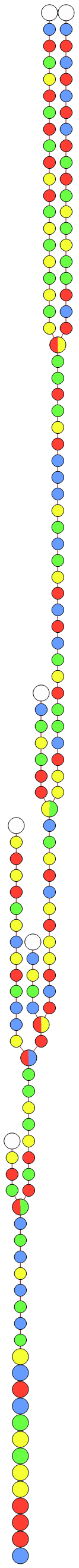} & \includegraphics[scale=0.24]{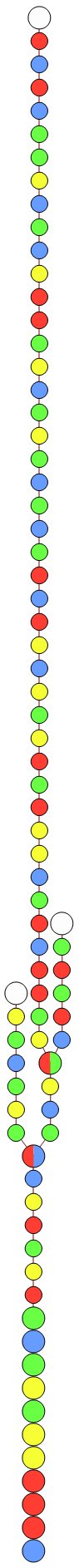} & \includegraphics[scale=0.24]{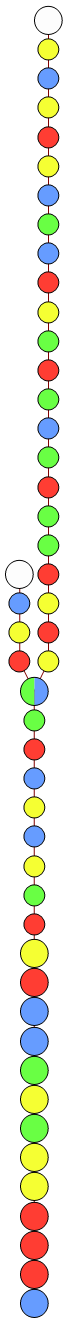} & \includegraphics[scale=0.24]{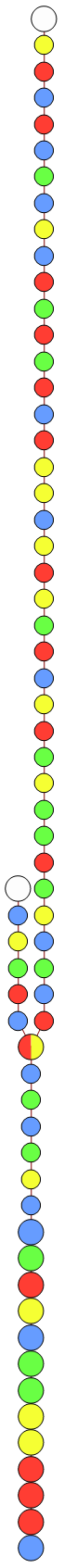} & \includegraphics[scale=0.24]{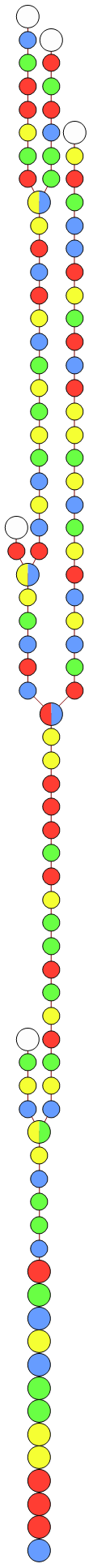} & \includegraphics[scale=0.24]{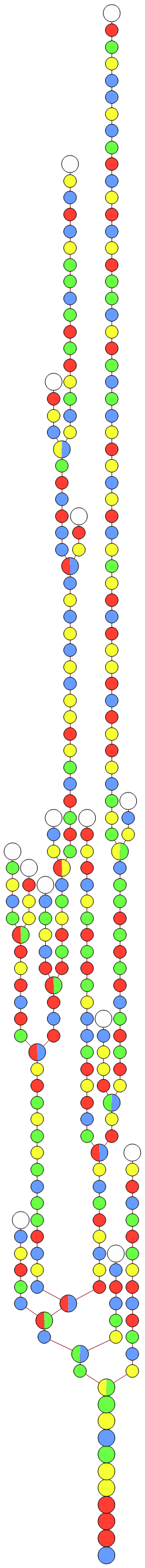} \\
    $S$ & $L_1$ & $L_2$ & $L_3$ & $L_4$ & $L_5$ & $L_6$ & $L_7$ & $L_8$ 
\end{tabular} \par
}
\caption{Trees for all nine non-isomorphic 2-Golomb roots. Special nodes (root, branch, end) are enlarged. The nodes whose color is not specified are shown in gray. End nodes are colored white.}
\label{trees}
\end{figure}

\newpage

\begin{table}[H]
\caption{Parameters of coloring trees (diagrams).}
\label{param}
\smallskip
\small
\centering
\begin{tabular}{@{}l|ccccccccc@{}}
\hline
tree name         &$S$ &$L_1$ &$L_2$ &$L_3$ &$L_4$ &$L_5$ &$L_6$ &$L_7$ &$L_8$\\
\hline
\hline
nodes in total    &11 &147 &43 &144 &80 &49 &65 &105 &247\\
root nodes        &7  &13  &11 &13  &11 &13 &13 &13  &10 \\
end nodes         &1  &5   &1  &6   &3  &2  &2  &5   &14 \\
vertices used     &11 &113 &43 &130 &74 &46 &64 &88  &155\\
\hline
elementary checks &4  &134 &32 &131 &69 &36 &52 &92  &237\\
\hline
\end{tabular} \par
\end{table}

\begin{table}[H]
{\caption{Orbits filling of the base graph $G_{481}$. Root orbits are grayed. }
\label{orb}

{
\centering
\small
\begin{tabular}{>{\!\!}c<{\!\!}|>{\!}c<{\!}|c|c|c<{\!}*{8}{>{\!}c<{\!\!}}}
\hline
orbit & \!orbit\! & \!\!vertex\!\! & \!orbit\! &\multicolumn{9}{c}{number of used vertices in trees}  \\
\cline{5-13}
 & \!\!radii\!\! & \!\!degree\!\! & \!order\! &$S$ &$L_1$ &$L_2$ &$L_3$ &$L_4$ &$L_5$ &$L_6$ &$L_7$ &$L_8$ \\
\hline
\hline

\gr (0,0,0,0)  &0      &30 &1  &0 &1 &1 &1 &1 &1 &1 &1 &1\\
\hline
    (0,0,4,0)  &.5774  &24 &6  &0 &4 &1 &5 &4 &1 &3 &6 &6\\
\gr (12,0,0,0) &1      &23 &6  &3 &6 &4 &6 &6 &6 &6 &6 &6\\
    (0,0,8,0)  &1.1547 &18 &6  &0 &2 &0 &2 &0 &0 &0 &1 &3\\
    (0,0,12,0) &1.7321 &8  &6  &0 &0 &1 &0 &0 &1 &0 &0 &1\\
\hline
    (6,2,0,0)  &0.4574, 1.4574 &15 &12 &0 &6 &0 &7 &6 &0 &3 &3 &8\\
\gr (2,0,0,2)  &0.5774 &20 &12 &4 &11 &6 &12 &8 &6 &6 &6 &9\\
    (10,0,0,2) &1      &16 &12 &0 &2 &5 &5 &6 &1 &2 &3 &7\\
    (0,2,2,0)  &1      &22 &12 &0 &8 &1 &8 &4 &2 &6 &4 &7\\
    (4,0,0,4)  &1.1547 &11 &12 &0 &4 &1 &3 &0 &2 &2 &2 &3\\
    (14,0,0,2) &1.2910 &8  &12 &0 &2 &0 &0 &0 &0 &3 &0 &2\\
    (0,2,6,0)  &1.2910 &12 &12 &0 &2 &0 &4 &3 &0 &2 &2 &5\\
    (8,0,0,4)  &1.2910 &10 &12 &0 &8 &3 &5 &1 &3 &1 &4 &4\\
    (6,0,0,6)  &1.7321 &6  &12 &0 &0 &0 &0 &0 &0 &3 &1 &0\\
    (0,2,10,0) &1.7321 &8  &12 &0 &0 &1 &1 &1 &0 &0 &1 &3\\
\hline
    (2,0,4,2)  &0.1685, 1.1423 &15 &24 &0 &2 &2 &4 &0 &2 &0 &5 &9\\
    (12,2,2,0) &0.2918, 1.9786 &13 &24 &0 &7 &0 &8 &2 &1 &3 &2 &7\\
    (2,0,6,4)  &0.2918, 1.9786 &8  &24 &0 &0 &1 &3 &0 &2 &0 &1 &3\\
    (4,0,2,2)  &0.4253, 0.9051 &16 &24 &0 &7 &3 &15 &1 &1 &1 &9 &15\\
    (4,0,6,2)  &0.4574, 1.4574 &12 &24 &2 &11 &4 &11 &11 &5 &8 &8 &14\\
    (4,0,4,4)  &0.6246, 1.7156 &7  &24 &0 &0 &0 &0 &0 &0 &0 &2 &2\\
    (8,0,2,2)  &0.7171, 1.0735 &12 &24 &0 &2 &2 &5 &1 &1 &1 &4 &7\\
    (8,0,6,2)  &0.7366, 1.5676 &10 &24 &0 &6 &2 &2 &1 &4 &2 &1 &6\\
    (6,2,4,0)  &0.7366, 1.5676 &15 &24 &2 &18 &4 &16 &16 &6 &11 &10 &16\\
    (2,0,2,4)  &0.8337, 1.4041 &9  &24 &0 &2 &0 &2 &0 &0 &0 &1 &3\\
    (10,0,4,2) &0.8337, 1.4041 &9  &24 &0 &0 &1 &3 &0 &1 &0 &2 &4\\
    (8,0,4,4)  &0.8505, 1.8101 &8  &24 &0 &0 &0 &0 &0 &0 &0 &1 &1\\
    (12,2,6,0) &0.8671, 2.1405 &8  &24 &0 &0 &0 &1 &0 &0 &0 &1 &3\\
    (6,2,8,0)  &1.2420, 1.8594 &9  &24 &0 &2 &0 &1 &2 &0 &0 &1 &0\\
\hline
\end{tabular}

}
}
\end{table}

\newpage

\begin{table}[H]
\caption{Lists of nodes of coloring diagrams for mono-triple with side $\sqrt3$. Every cell represents a single node. Numbers mean (from top to bottom): ordinal node number, then numbers of neighbor nodes previously colored $1\dots4$ in the left and vertex coordinates in the form $(a, b, c, d)$ in the right.}
\label{nodes}

\vspace{2mm}

\begin{tabular}{cc}
$S$ & \includegraphics[scale=0.115]{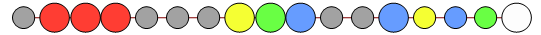}
\end{tabular} \par
\includegraphics[scale=0.337]{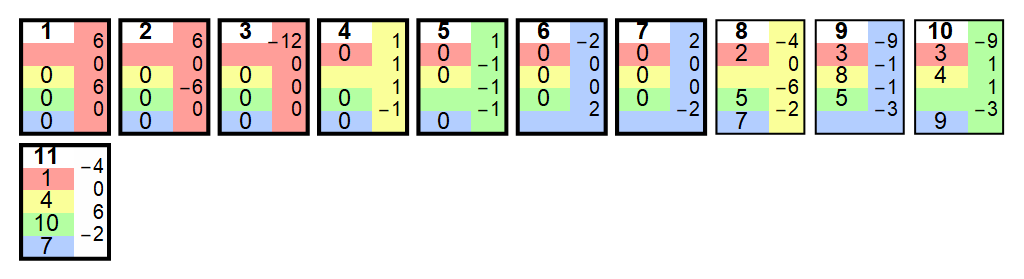}

\begin{tabular}{cc}
$L_1$ & \includegraphics[scale=0.115]{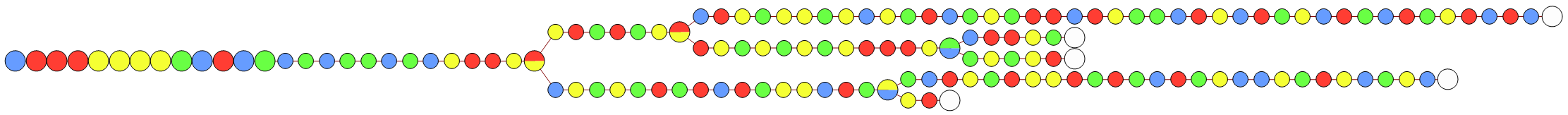}
\end{tabular} \par
\includegraphics[scale=0.337]{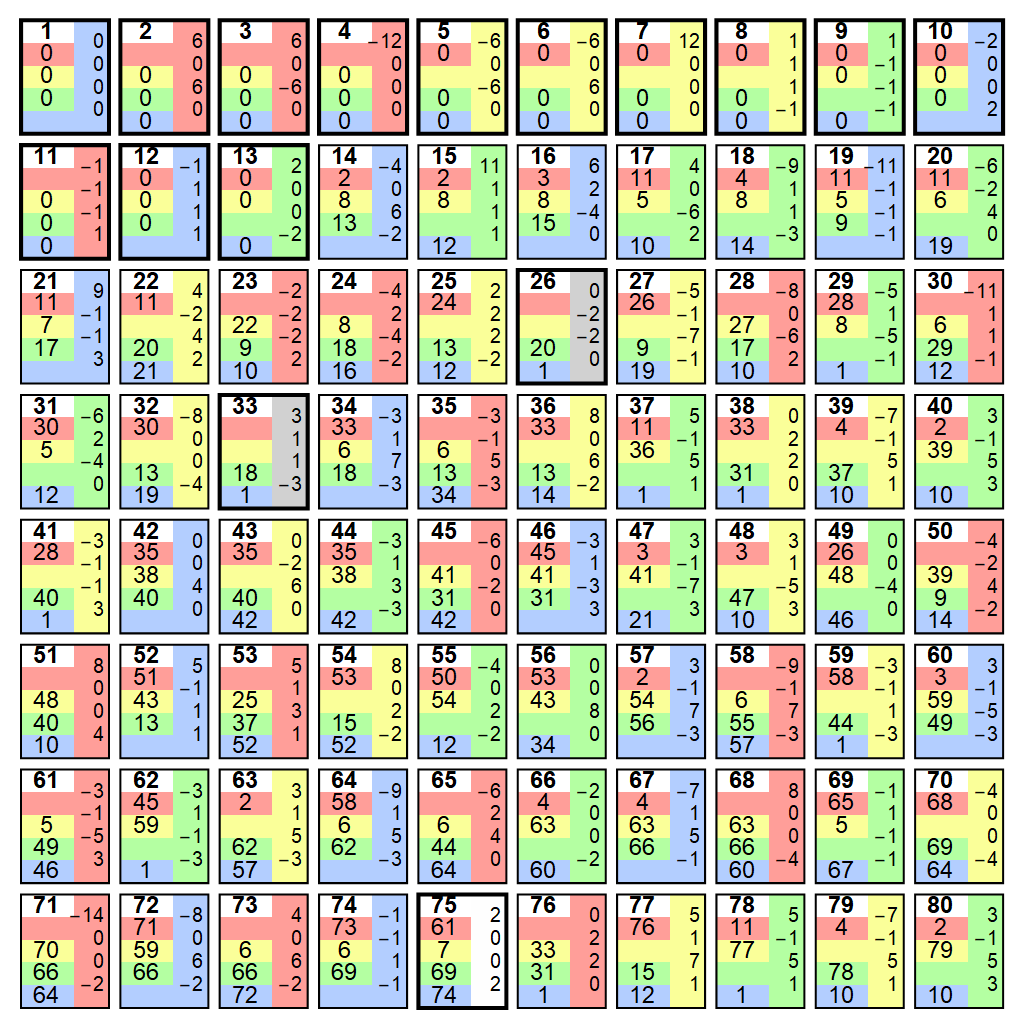}
\end{table}

\begin{figure}[H]
\includegraphics[scale=0.337]{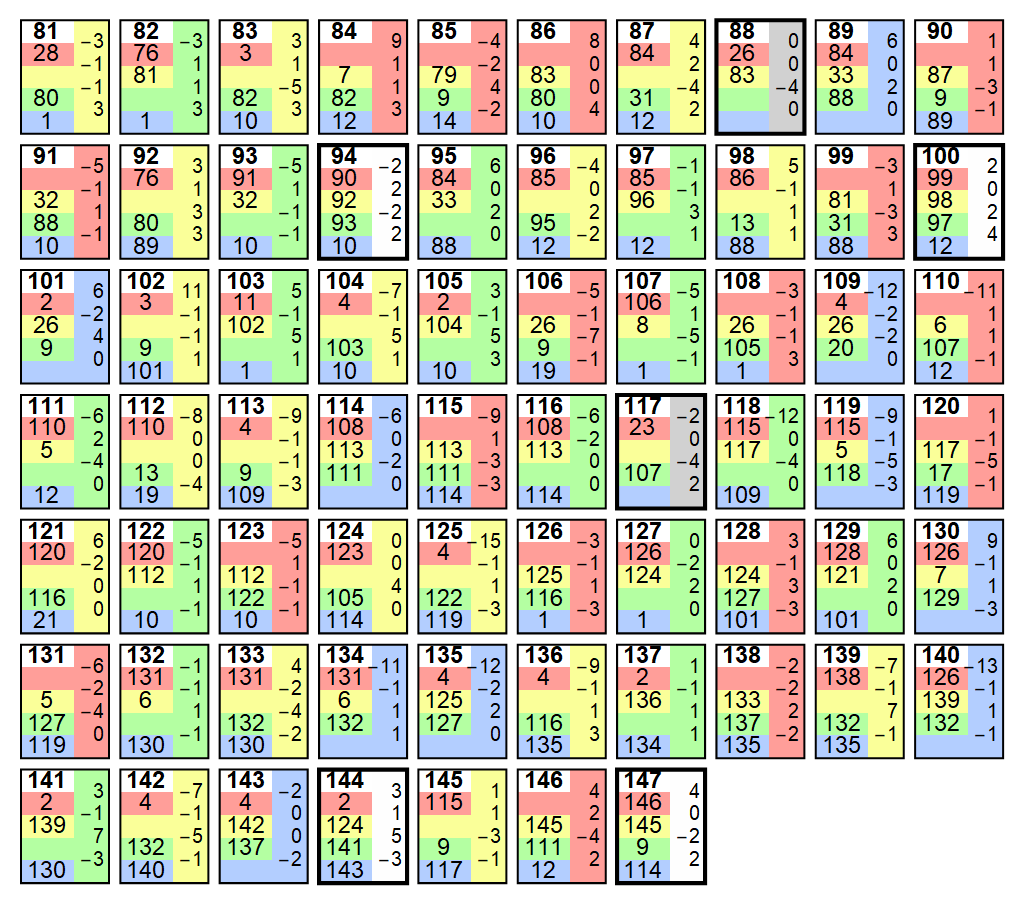}

\begin{tabular}{cc}
$L_2$ & \includegraphics[scale=0.115]{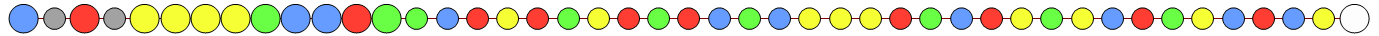}
\end{tabular} \par
\includegraphics[scale=0.337]{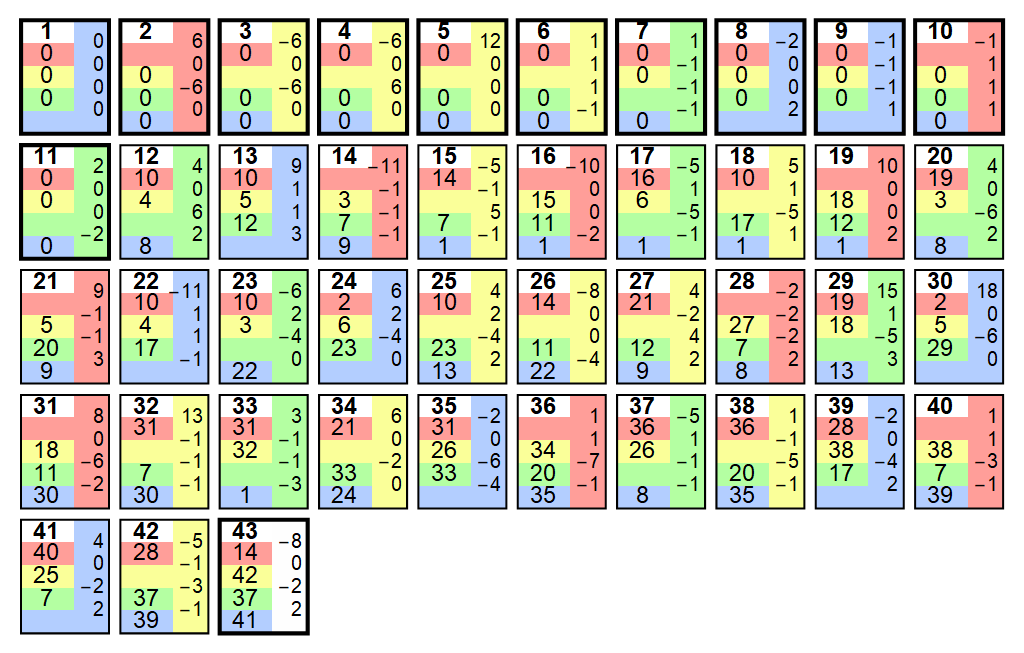}
\end{figure}

\begin{figure}[H]
\begin{tabular}{cc}
$L_3$ & \includegraphics[scale=0.115]{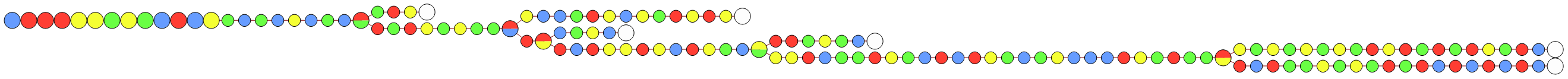}
\end{tabular} \par
\includegraphics[scale=0.337]{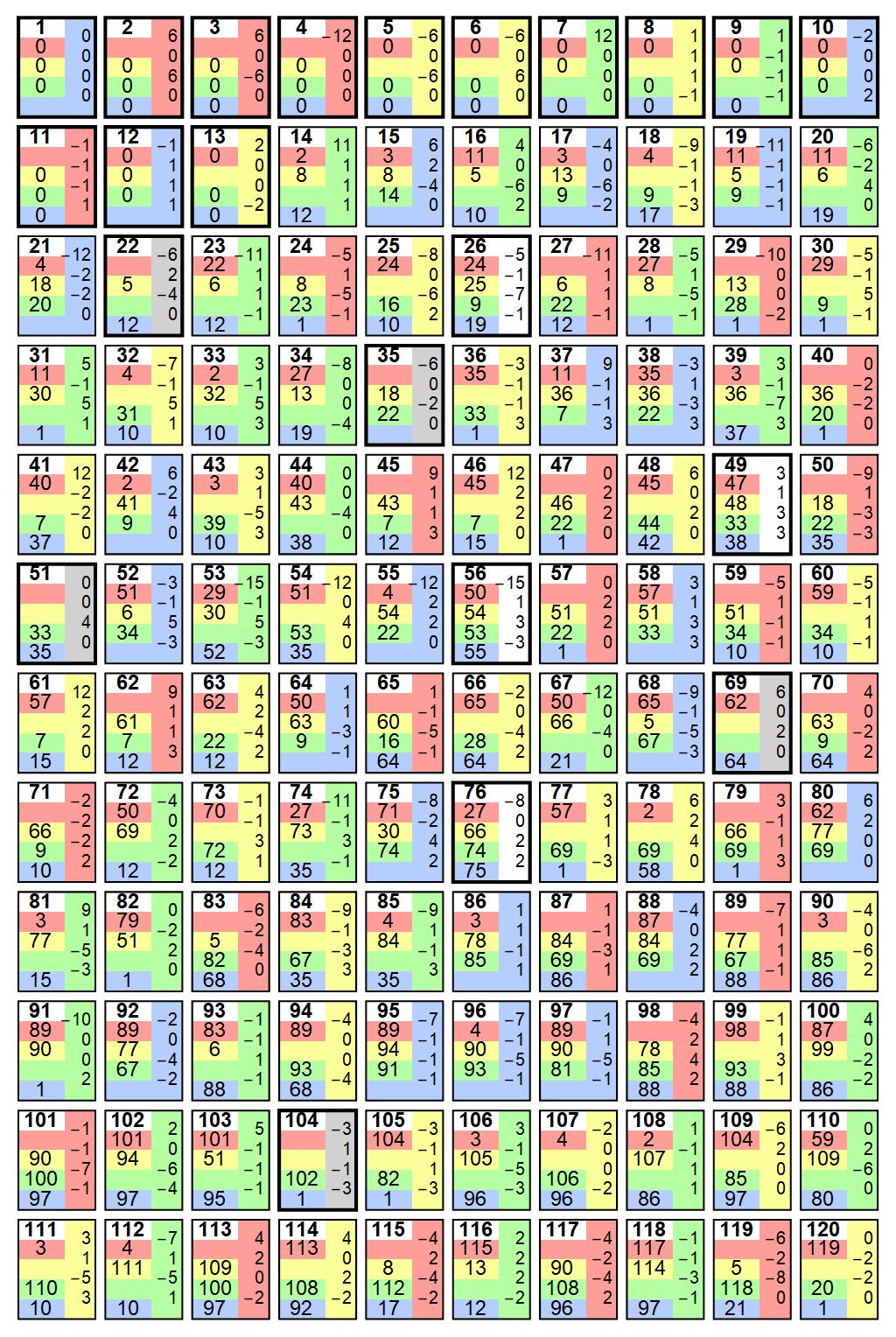}
\end{figure}

\begin{figure}[H]
\includegraphics[scale=0.337]{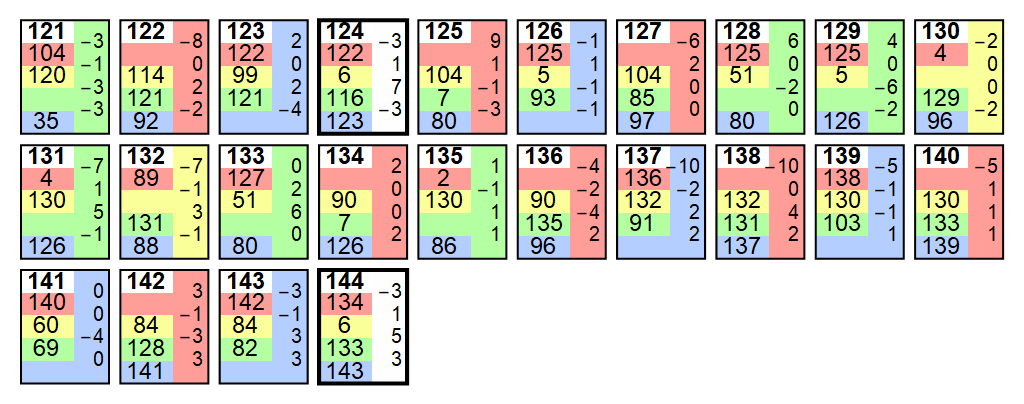}

\begin{tabular}{cc}
$L_4$ & \includegraphics[scale=0.115]{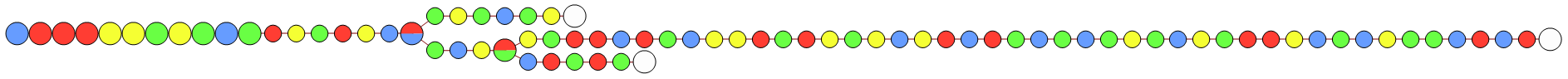}
\end{tabular} \par
\includegraphics[scale=0.337]{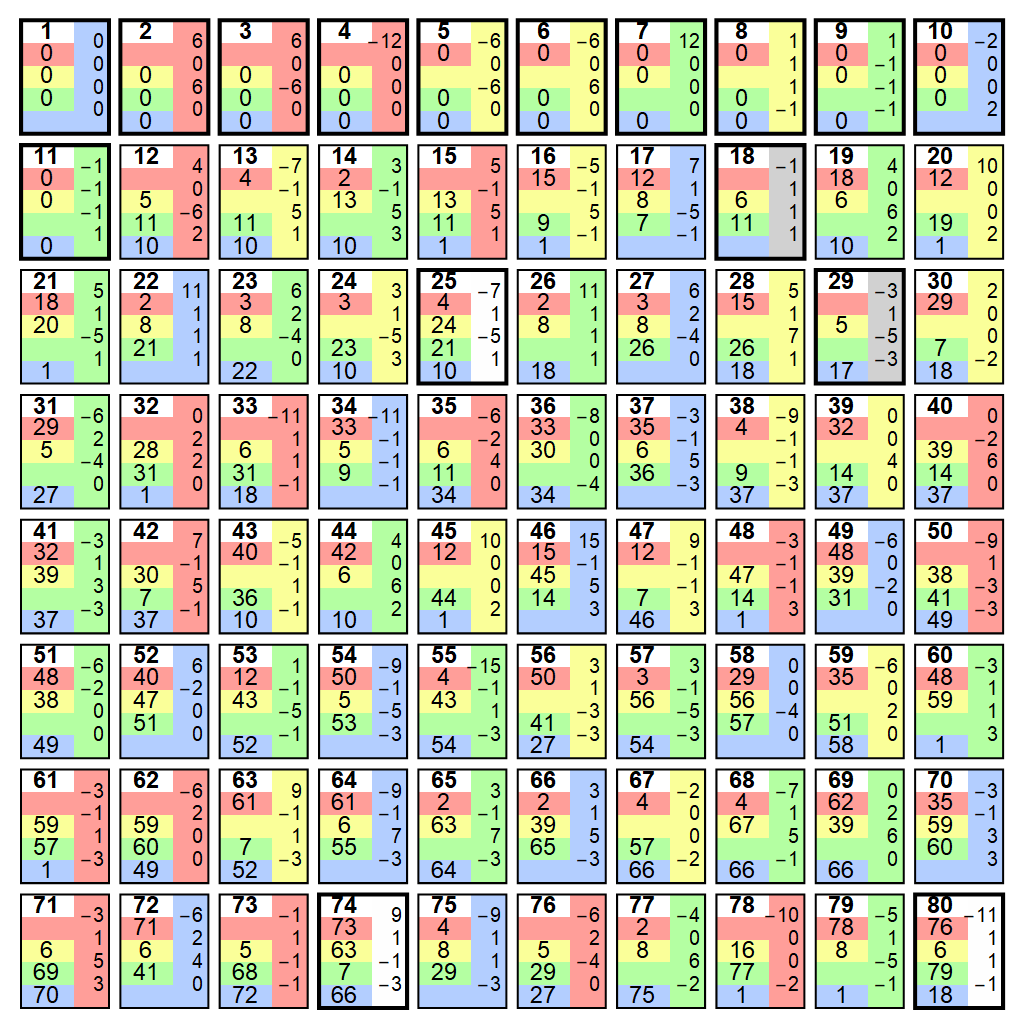}
\end{figure}

\begin{figure}[H]
\begin{tabular}{cc}
$L_5$ & \includegraphics[scale=0.115]{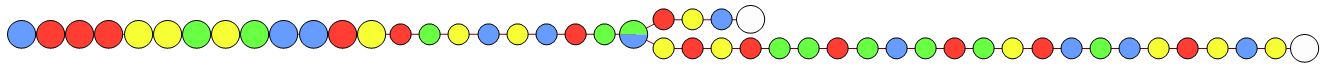}
\end{tabular} \par
\includegraphics[scale=0.337]{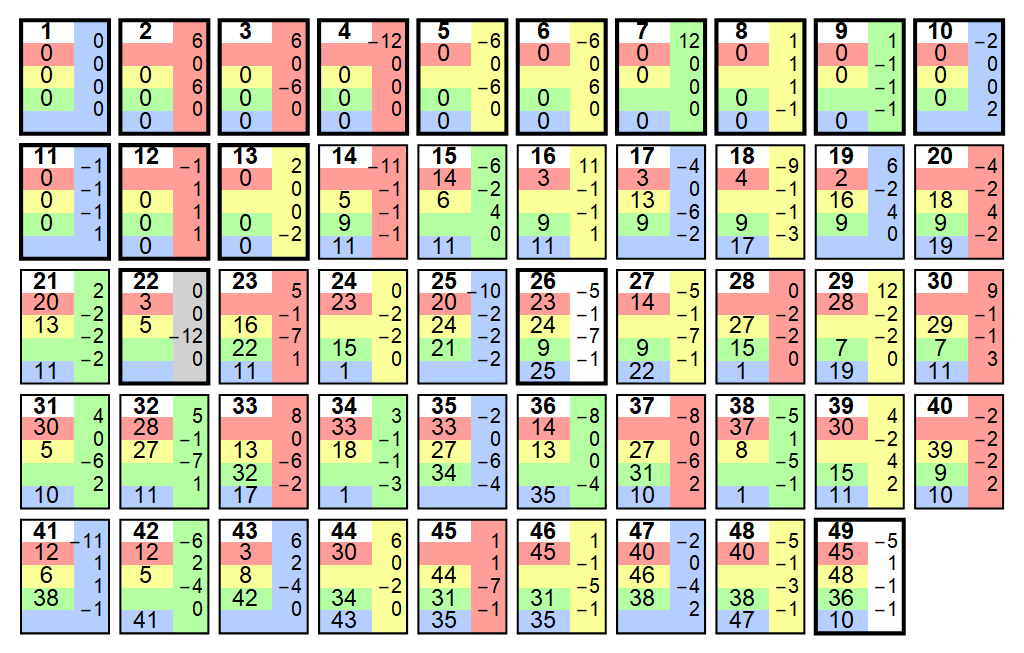}

\begin{tabular}{cc}
$L_6$ & \includegraphics[scale=0.115]{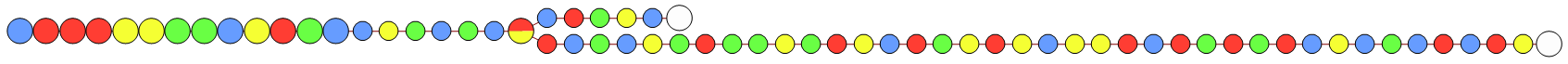}
\end{tabular} \par
\includegraphics[scale=0.337]{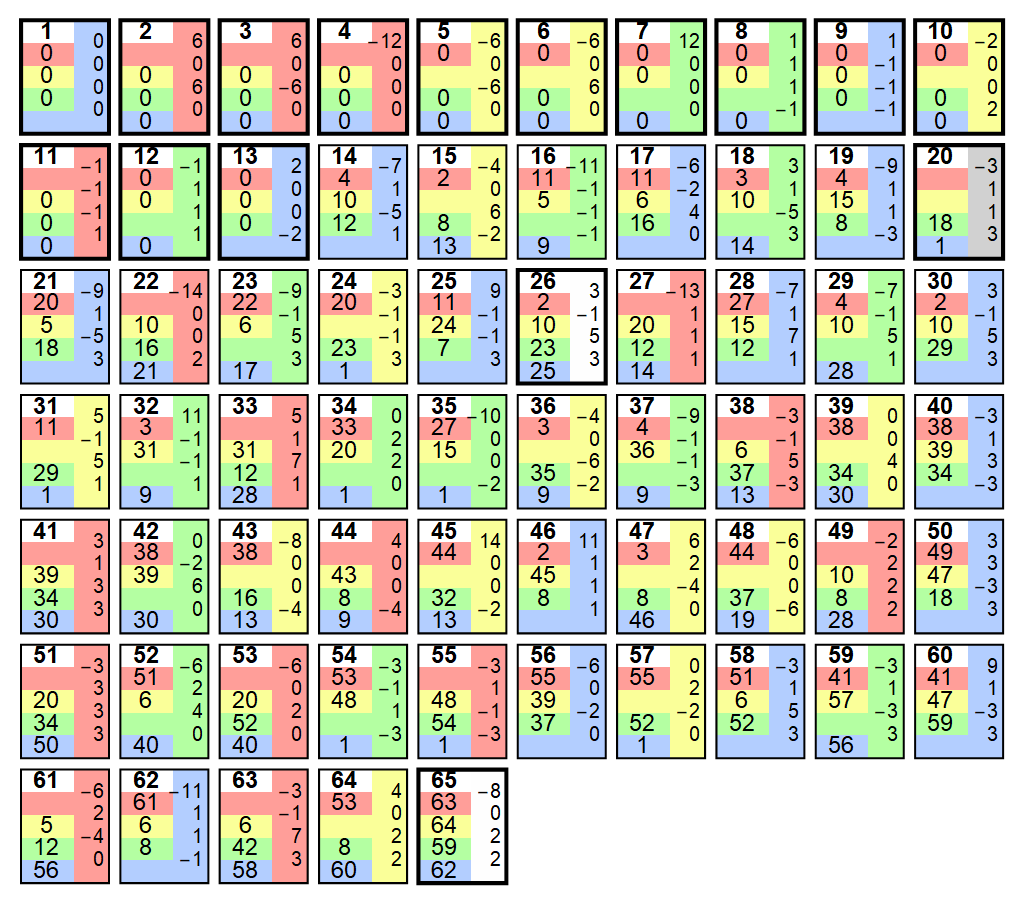}
\end{figure}

\begin{figure}[H]
\begin{tabular}{cc}
$L_7$ & \includegraphics[scale=0.115]{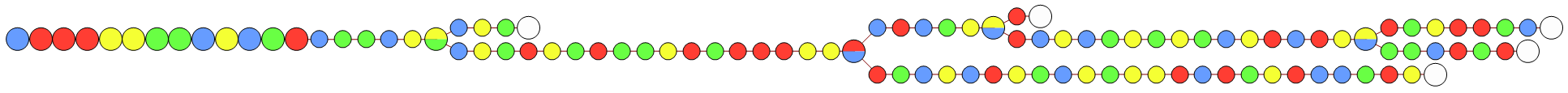}
\end{tabular} \par
\includegraphics[scale=0.337]{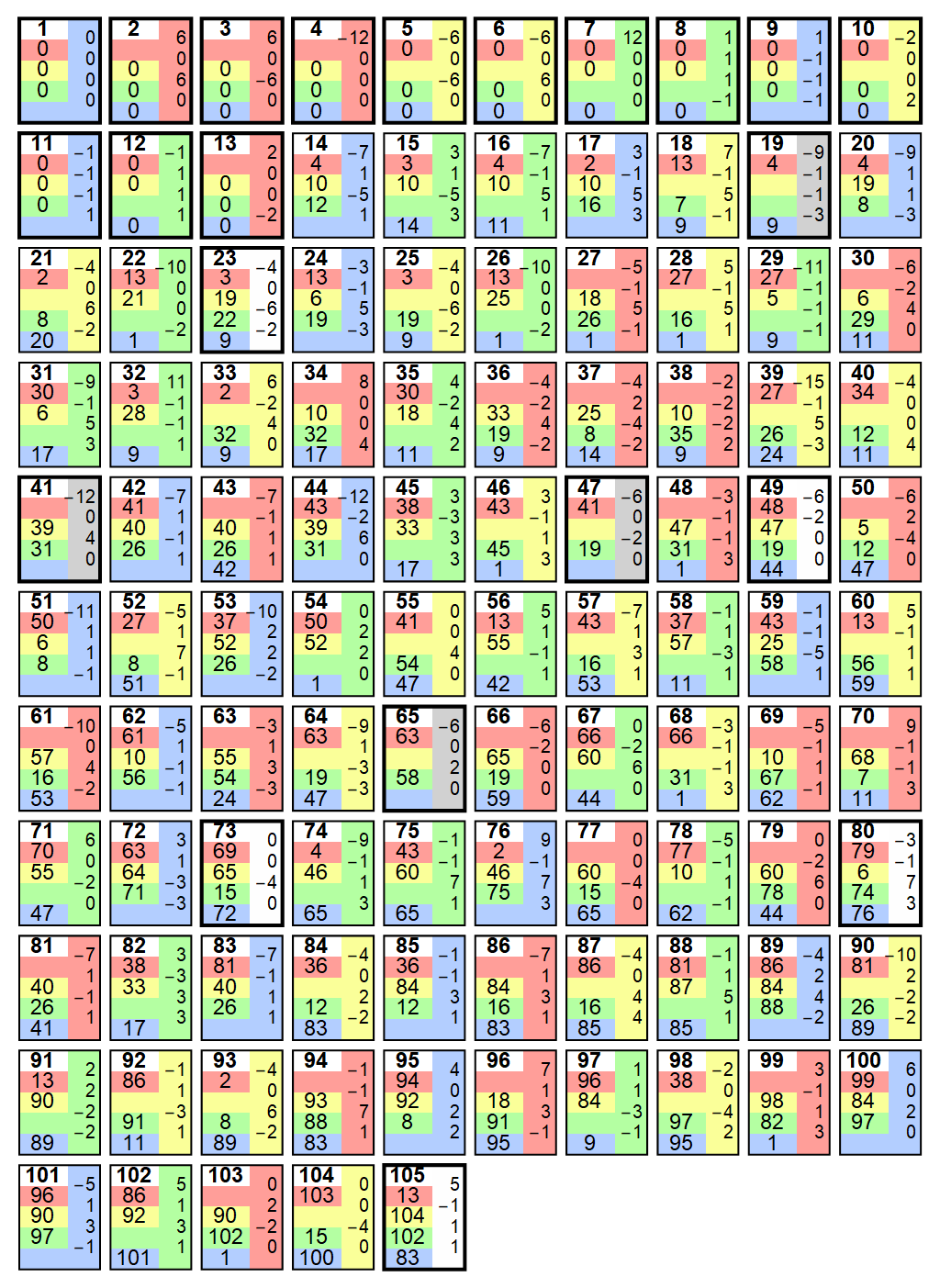}
\end{figure}

\begin{figure}[H]
\begin{tabular}{cc}
$L_8$ & \includegraphics[scale=0.115]{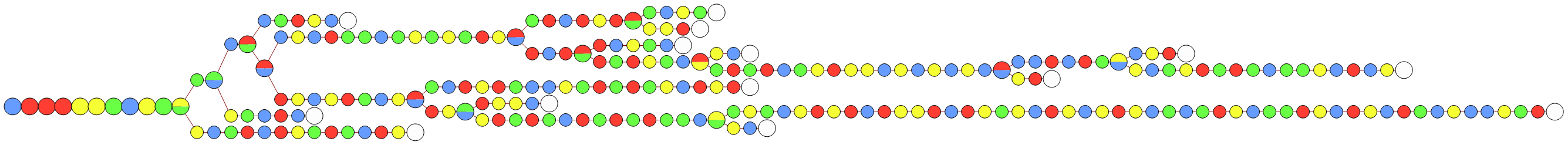}
\end{tabular} \par
\includegraphics[scale=0.337]{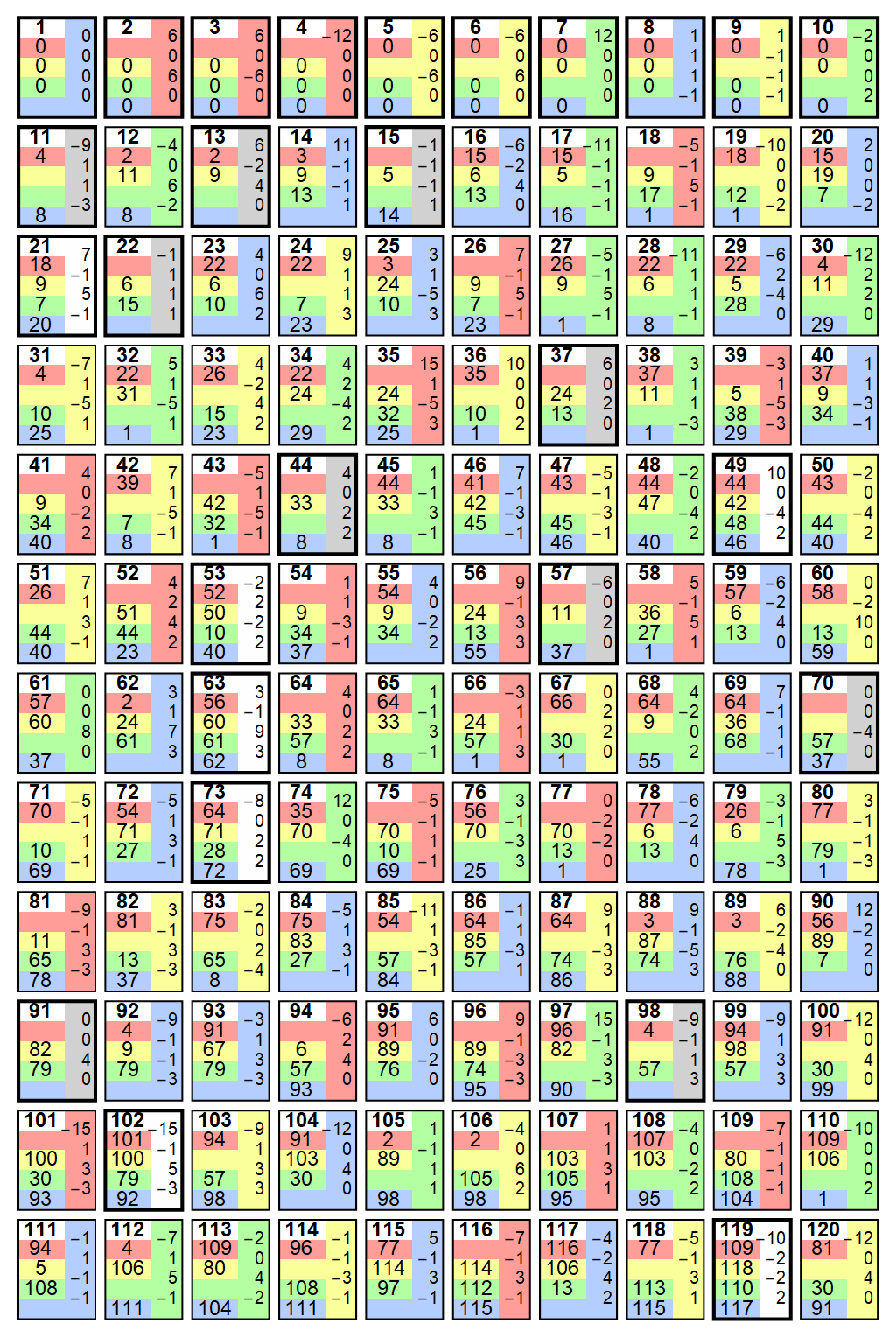}
\end{figure}

\begin{figure}[H]
\includegraphics[scale=0.337]{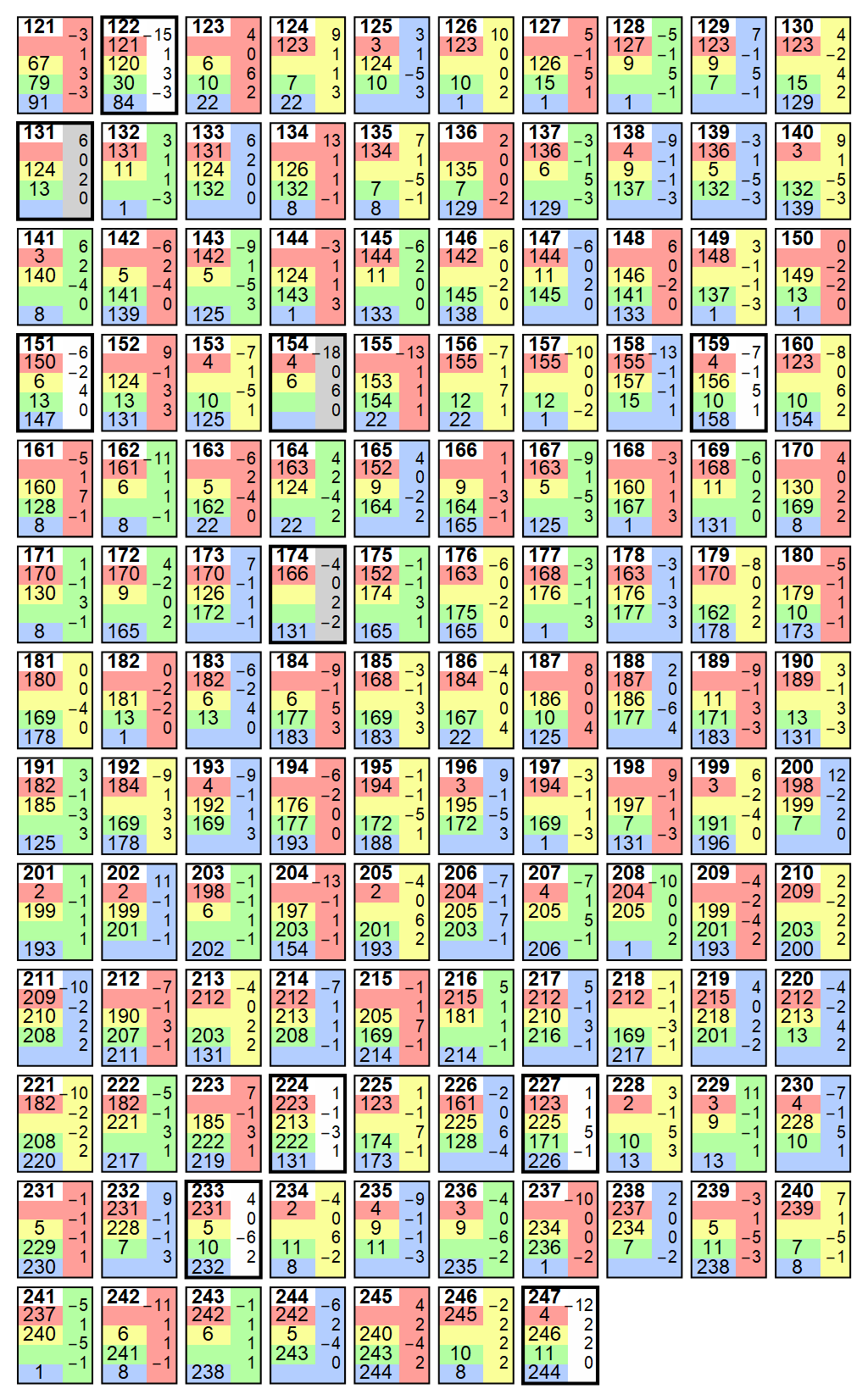}
\end{figure}

\section{Bonsai}

A few words about this Japanese art to produce small trees.

\paragraph{Growing a tree.}
The construction of a tree begins with the definition of a base graph $B$, which certainly contains a non-mono-triple, and a root coloring $R$ of some of its vertices. The requirements for the base graph are contradictory. The degree of the vertices and the order of the symmetry group of the base graph should be as large as possible. The larger the base graph, the more likely it can contain a smaller tree, but the more difficult it is to find. To resolve this contradiction, various working subgraphs $W$ are selected from the base graph $B$.

For each root coloring $R$ and a set of vertices of the working graph $W$ sorted in random order, a tree is constructed. The tree can be represented as a list of records, each of which reflects the act of assigning a color to a certain vertex and has four parameters: the number of the current node, the number of the next node, the number of the current vertex, and the color number of the current vertex. In the case of a branch, the coloring continues to the end, after which the next branch is created, while the branch node number is repeated with a different color.

The tree is built sequentially. At each step, a list of uncolored vertices is compiled, from which the first vertex with the minimum number of available colors is selected, optionally possessing some additional set of properties (for example, having the maximum degree). This vertex with all coloring options is added to the tree, after which the next step is performed. The construction of the tree is completed when all coloring paths have end nodes.

\paragraph{Thinning a tree.}
Each tree constructed is thinned out. This removes all redundant nodes that are not necessary to obtain the same set of end nodes. Before thinning, a description of tree nodes is formed, including for each color a list of nodes corresponding to either previously colored neighboring vertices of this color, or vertices that are colored later and cannot accept this color. Additionally, each node is characterized by its type (root, branch, stem, end) and status (necessary, undefined, redundant).

The status of a node depends on the number of its output connections with subsequent nodes and whether other outputs are connected to these nodes. A node is necessary if it is connected to some subsequent node such that no other nodes are connected to the same input color port. Branch and end nodes can also be referred to the necessary ones. A node is redundant if it is not connected to any subsequent nodes. The status of the remaining nodes is undefined.

First all redundant nodes are removed. This is done in several iterations, as the chains of redundant nodes may appear. Then undefined nodes are removed until only the necessary ones remain. The order in which the next deleted node is selected may depend on the location of nodes in the tree, the length of the corresponding branch, the number of connected nodes, and the vertex degree. We were thinning by going backwards from the end to the beginning of the tree. First of all, nodes with fewer output connections were removed. Root and branch nodes were not removed. After deleting each next node, the status of the remaining nodes may change. One can try several different thinning options and then choose the best tree with the fewest remaining nodes.
We found that, on average, thinning reduces the number of nodes by half.

\paragraph{Tree minimization.}
The procedure for constructing and thinning trees is repeated many times, and the smallest of the resulting trees is left. At the same time, one can change the set of working graphs, the order of vertex numbering, the function of selecting a vertex when building a tree. Elements of genetic algorithms can be used. For example, the best tree is taken as a basis, relative to which small random changes (mutations) are made until this leads to improvement, after which a new tree is taken as a basis and the search is repeated.

We started with a set of a dozen working graphs of different sizes from 259 to 727 vertices, selected at random from the set of graphs left over from the previous study (minimizing a 5-chromatic graph). After a series of tree minimization stages, all trees began to fit into the 481-vertex graph shown in Fig.\ref{base}. Further minimization was carried out on the subgraphs of this base graph. In the last stages, we discarded different parts of the tree and tried to grow new ones instead.

\section{Back to the future}
One can set the task of obtaining the smallest possible coloring tree. The simplest and most convenient optimization criterion is the total number of nodes. One can penalize additionally for each branch, but usually such a penalty is implicitly already included in the good solution.

It is likely that the total number of nodes could be reduced by another hundred or two. Among other things, the following obvious directions of attack can be indicated: deeper search, expansion of the base graph, various combinations of orbits, other root graphs, other sets of trees, development of a genetic approach (using in parallel a set of surviving descendants, permutation of loci). 
Note that in our case the 4-Golomb graph and other variants of the 2-Golomb graphs as a root one did not give a good result.

The proof presented here may leave a sediment of dissatisfaction. We tend to explain this by the fact that it is expected not only \textit{human-verifiable} proof, but really \textit{human} proof. The difference is that in the latter case, the proof is created by a person, while in the first case it is not necessary: it is only important that the person is able to complete the verification in a reasonable number of steps.

It is believed that the average person cannot keep more than 7 objects in the field of attention. This highly limits our arsenal of proof methods. The less redundant a mathematical object is, the more difficult it is to deduce any consequences from it, especially if it is complex enough, that is, it has many different internal relationships. Perhaps 5-chromatic graphs are just such objects.

\section{Thank you}
Aubrey, David, Nicholas, Jasper, and Alexander.

\end{document}